\theoremstyle{plain} 
\newtheorem{thm}{\indent Theorem}[section] 
\newtheorem{lemma}[thm]{\indent Lemma}
\newtheorem{cor}[thm]{\indent Corollary}
\theoremstyle{definition} 
\newtheorem{example}[thm]{\indent Example}
\theoremstyle{remark}
\newcommand{\bC}{{\mathbb C}}
\newcommand{\bR}{{\mathbb R}}
\newcommand{\bZ}{{\mathbb Z}}
\newcommand{\rRe}{{\mathrm {Re}}}
\newcommand{\rIm}{{\mathrm {Im}}}
\begin{document}

\title[SPECIAL LAGRANGIAN SUBMANIFOLDS IN $\bC ^n $ with flat and Fubini-Study metrics]{SPECIAL LAGRANGIAN SUBMANIFOLDS IN $\bC ^n $ with flat and Fubini-Study metrics} 

\author[H. Nakahara]{Hiroshi Nakahara} 
\subjclass[2010]{Primary 53D12, 
Secondary 35R01.}

\address{
Department of Mathematics \endgraf 
Tokyo Institute of Technology \endgraf
 2-21-1  O-okayama, Meguro, Tokyo\endgraf
Japan
}
\email{nakahara.h.ab@m.titech.ac.jp}

\maketitle

\begin{abstract}
We give a necessary and sufficient condition for a special Lagrangian submanifold in $\bC ^n$ constructed by Lawlor being also special Lagrangian in $(\bC ^n , \omega_{FS}),$ where $\omega_{FS}$ is the Fubini-Study form.

\end{abstract}

\section{Introduction}
\quad In \cite{Mc}, McLean proved that the moduli space of compact special Lagrangian deformations in Calabi-Yau manifold is a smooth manifold of dimension of the $1^{\rm{th}}$ Betti number $b^1$ of the special Lagrangian submanifold. Gross, Huybrechts and Joyce extended this result to almost Calabi-Yau case in \cite{GHJ}. The moduli space of non-compact special Lagrangian deformations in (almost) Calabi-Yau manifold is also studied. We construct a non-compact special Lagrangian submanifold in almost Calabi-Yau $(\bC ^n , \omega_{FS},dz_1 \wedge \cdots \wedge dz_n).$
Our ambient spaces are always the complex Euclidean space $\mathbb{C}^n$ with coordinates $z_j =x_j +iy_j$ and the standard almost complex structure $J$ with $J(\partial /\partial x_j)=\partial /\partial y_j .$ 
In $\bC ^n,$ we consider the standard symplectic form $\omega _{st}=(i/2)\sum_{j=1}^n dz_j \wedge d\bar{z}_j ,$ the Fubini-Study form $\omega_{FS} =(i/2)\partial \bar{\partial} \log (1+|z_1|^2 +\cdots +|z_n|^2)$ and the holomorphic $(n,0)$-form $\Omega = dz_1 \wedge \cdots \wedge dz_n .$
Then $(\bC ^n , J,\omega_{st}, \Omega)$ is Calabi-Yau and $(\bC ^n , J, \omega_{FS}, \Omega)$ is almost Calabi-Yau. 
A \textit{Lagrangian submanifold} is a real $n$-dimensional submanifold on which the 
symplectic form vanishes. A Lagrangian submanifold $L$ in $(\bC ^n ,J, \omega_{st}, \Omega)$ or $(\bC ^n , J, \omega_{FS}, \Omega)$ is called special Lagrangian if $(\rIm \, \Omega)|_L =0$ holds. It is well known that special Lagrangian submanifolds 
in Calabi-Yau manifolds are area-minimazing. Furthermore, special Lagrangian submanifolds in almost 
Calabi-Yau manifolds are also area-minimazing with respect to a deformed metric $cg,$ where $c$ is some positive function and $g$ is the ${\rm K\ddot{a}hlarian}$ metric on the almost Calabi-Yau manifold. 

By the equation $$\omega_{FS}=\frac{1}{1+|z_1|^2 +\cdots +|z_n|^2}\,\omega_{st}-\frac{i}{2(1+|z_1|^2 +\cdots +|z_n|^2)^2}\sum_{p,q=1}^n \bar{z}_p z_q dz_p \wedge d\bar{z}_q ,$$
we can see that one half of $\omega_{FS}$ is $\omega_{st}.$
So it is natural to try to construct a Lagrangian submainifolds in $(\bC ^n , \omega_{FS})$ from the set of Lagrangian submanifolds in $(\bC ^n , \omega_{st}).$ 

Let  $a_1 ,\ldots ,a_n>0,$ $\psi_1 ,\ldots ,\psi_n \in \mathbb{R}$ be constants.
Define $r_1 ,\ldots ,r_n  :\mathbb{R}\to \mathbb{R}$ by  
$r_j (s)= \sqrt{1/a_j +s^2},$
and  $\phi _1 ,\ldots ,\phi_n :\mathbb{R}\to \mathbb{R}$  by
\begin{equation*}
\phi_j(s) =\psi _j + \int_{0}^{s} \frac{|t|dt}{(1/a_j +t^2)\sqrt{\prod_{k=1}^{n}(1+a_k  t^2)-1}}.
\end{equation*}
It is well know that the submanifold $L$ in $\mathbb{C}^n$ given by
\begin{equation*}
L=\{(x_1r_1(s) e^{i\phi_1(s)},\ldots,x_n r_n(s)e^{i\phi_n(s)});\sum_{j=1}^n  x_j^2 =1, x_j \in\mathbb{R}, s\in\mathbb{R}\} 
\end{equation*}
is a closed embedded special Lagrangian in $(\bC ^n , \omega_{st})$ diffeomorphic to $\mathcal{S}^{n-1} \times \bR.$ This is one
of Lawlor's examples of special Lagrangian submanifolds \cite{La}.
For a treatment of a more general case we refer the reader to \cite[Theorem 1.3]{N}. The following Theorem \ref{mm} is the main theorem of this paper.
\begin{thm}\label{mm}
In the above situation, $L$ is special Lagrangian in $(\bC ^n , \omega_{FS})$ if and only if  $a_1 =\cdots =a_n .$
\end{thm}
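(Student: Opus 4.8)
The plan is to reduce the statement to a single differential-form identity on $L$ and then read off the condition $a_1 = \cdots = a_n$. Since the holomorphic form $\Omega = dz_1 \wedge \cdots \wedge dz_n$ is common to both structures and $L$ is already special Lagrangian in $(\bC^n, \omega_{st})$, the phase condition $(\rIm\,\Omega)|_L = 0$ holds automatically. Hence $L$ is special Lagrangian in $(\bC^n, \omega_{FS})$ if and only if the Lagrangian condition $\omega_{FS}|_L = 0$ holds. Feeding the displayed identity relating $\omega_{FS}$ and $\omega_{st}$ into this, and using $\omega_{st}|_L = 0$, the problem becomes equivalent to
\[
\left(\sum_{p,q=1}^n \bar{z}_p z_q\, dz_p \wedge d\bar{z}_q\right)\bigg|_L = 0 .
\]

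First I would use the coordinates $(x_1,\ldots,x_n,s)$ on $L$ subject to $\sum_j x_j^2 = 1$, with $z_j = x_j r_j(s) e^{i\phi_j(s)}$. Pulling back gives $dz_j = e^{i\phi_j}\bigl(r_j\, dx_j + x_j(r_j' + i r_j\phi_j')\, ds\bigr)$ and its conjugate. Substituting into the sum, the phases $e^{\pm i\phi}$ cancel against those in $\bar z_p z_q$, and the purely $dx_p \wedge dx_q$ contribution drops out because its coefficient $x_p x_q r_p^2 r_q^2$ is symmetric in $p,q$ while the wedge is antisymmetric. Collecting the surviving $dx_p \wedge ds$ terms, the expression factors as $-2i\bigl(\sum_p x_p^2 r_p^2 \phi_p'\bigr)\, B \wedge ds$, where $B = \sum_p x_p r_p^2\, dx_p$.

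The decisive step is to differentiate the defining integral: one finds $r_p^2\phi_p'(s) = |s|\big/\sqrt{\prod_{k=1}^n(1+a_k s^2) - 1}$, which is \emph{independent of $p$}. Denoting this common value $f(s)$ and using $\sum_p x_p^2 = 1$, the scalar coefficient $\sum_p x_p^2 r_p^2\phi_p'$ collapses to $f(s)$, so the restricted form equals $-2i\, f(s)\, B \wedge ds$. Since $f(s) > 0$ for every $s$ (the apparent $0/0$ at $s=0$ has limit $1/\sqrt{\sum_k a_k}$), the vanishing is equivalent to the single requirement $B \wedge ds = 0$ on $L$.

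Finally I would test $B \wedge ds = 0$ against the constraint $\sum_p x_p\, dx_p = 0$. Eliminating one differential, say $dx_n$, rewrites $B$ as $\sum_{p<n} x_p(r_p^2 - r_n^2)\, dx_p$ in terms of the independent $dx_1,\ldots,dx_{n-1},ds$; hence $B \wedge ds = 0$ on $L$ if and only if $x_p(r_p^2 - r_n^2) = 0$ identically. Evaluating at points where all $x_p \neq 0$ (an open dense subset of $L$) forces $r_p^2 = r_n^2$, i.e. $1/a_p = 1/a_n$, for every $p$, giving $a_1 = \cdots = a_n$; the converse is immediate, since equal $a_j$ make $r_p^2$ independent of $p$, whence $B = r^2\sum_p x_p\, dx_p = 0$ on $L$. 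I expect the bookkeeping of the form computation in the second paragraph to be the main obstacle, but the genuine content is the identity $r_p^2\phi_p' = f(s)$, which is precisely what lets the coefficient factor through the sphere constraint.
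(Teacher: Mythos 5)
Your proof is correct, and its skeleton matches the paper's: both reduce ``special Lagrangian for $\omega_{FS}$'' to ``Lagrangian for $\omega_{FS}$'' because $(\rIm\,\Omega)|_L=0$ is inherited from the $\omega_{st}$-special Lagrangian property, and both use the displayed decomposition of $\omega_{FS}$ to reduce further to the vanishing on $L$ of $\tau=\sum_{p,q}\bar z_p z_q\,dz_p\wedge d\bar z_q$. The execution, however, is genuinely different. The paper routes everything through Lemma~\ref{lem}, a biconditional proved for arbitrary quadrics $\sum_j\lambda_j x_j^2=C$ and arbitrary profile curves $\omega_j=r_je^{i\phi_j}$, using an orthonormal frame $e_1,\ldots,e_{n-1}$ of the quadric and the invariance $t^*\omega=\omega$ under $t(z)=(e^{i\phi_1}z_1,\ldots,e^{i\phi_n}z_n)$ to push all evaluations to real points; Theorem~\ref{mm} then follows by specializing $\lambda_j=1$, $C=1$, where condition \eqref{Im} holds automatically and condition \eqref{2} reads $1/a_1=\cdots=1/a_n$. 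You never state a general lemma and instead pull $\tau$ back directly in the $(x,s)$-coordinates of Lawlor's example: your phase cancellation does the work of the paper's torus invariance, your symmetric-coefficient cancellation of the $dx_p\wedge dx_q$ terms does the work of the paper's appeal to $\bR^n\subset\bC^n$ being Lagrangian for both forms, and your factorization $\tau|_L=-2if(s)\,B\wedge ds$ with $B=\sum_p x_p r_p^2\,dx_p$ is the coordinate-free form of the paper's identity $\tau(\iota_*(e_j),\iota_*(\partial/\partial s))=-2i\langle(x_1r_1^2,\ldots,x_nr_n^2),e_j\rangle\cdot(\sum_q x_q^2r_q^2\dot\phi_q)$; your elimination of $dx_n$ and evaluation at points with all $x_p\neq0$ replaces the paper's conclusion that $(x_1r_1^2,\ldots,x_nr_n^2)$ must be everywhere normal to the sphere. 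Your route is more elementary and self-contained, and the identity $r_p^2\dot\phi_p=f(s)$ (with the correct limit $1/\sqrt{\sum_k a_k}$ at $s=0$) makes the nonvanishing of the scalar factor transparent --- though note that only positivity of the $\dot\phi_p$ is needed at that step, and that identity is precisely the paper's condition \eqref{Im}, i.e.\ the $\omega_{st}$-Lagrangian condition you took as known. What the paper's longer route buys is generality: Lemma~\ref{lem} covers arbitrary $(\lambda_j, C, \omega_j)$ and therefore also yields Example~\ref{1}, which your specialized computation does not.
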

We put $g_{FS}$ the Fubini-Study metric.
In the situation of Theorem \ref{mm}, there exists a unique positive function $c$ on $\bC ^n$ such that the mean curvature vector of $L$ with respect to the metric $cg_{FS}$ is equal to zero-vector for any point of $L.$ It is well known that the mean curvature vector is equal to $\Delta F,$ where $F$ is the position vector $F:L\to \bC ^n$ and $\Delta$ is the Laplace operator with respect to the metric $F^{*}(cg_{FS}).$ So we get the following Corollary \ref{c}.
\begin{cor}\label{c}
In the situation of Theorem \ref{mm}, the position vector $F:L\to \bC ^n$ is a solution of the following Laplace's equation 
$$\Delta u =0,$$ 
where $\Delta$ is the Laplace operator with respect to the metric $F^{*}(cg_{FS}).$
\end{cor}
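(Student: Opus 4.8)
The plan is to obtain Corollary \ref{c} as an immediate consequence of the two facts recorded just before its statement: the existence of the conformal factor $c$ rendering $L$ minimal, and the classical identification of the mean curvature vector with the Laplacian of the position map. Concretely, I would argue that under the hypothesis $a_1 = \cdots = a_n$ the submanifold $L$ is minimal in $(\bC^n, cg_{FS})$, and then read off $\Delta F = 0$ from the identity $H = \Delta F$.

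First I would recall why the conformal factor $c$ exists and is unique. By Theorem \ref{mm}, when $a_1 = \cdots = a_n$ the submanifold $L$ is special Lagrangian in $(\bC^n, \omega_{FS})$, that is, in the almost Calabi-Yau manifold $(\bC^n, J, \omega_{FS}, \Omega)$. By the general principle recalled in the Introduction, a special Lagrangian submanifold of an almost Calabi-Yau manifold is area-minimizing with respect to a conformally rescaled metric $cg_{FS}$; the positive function $c$ on $\bC^n$ is determined by the almost Calabi-Yau data through the pointwise comparison of $\omega_{FS}^{\,n}$ with $\Omega \wedge \overline{\Omega}$, and is thereby unique. Being area-minimizing for $cg_{FS}$, the submanifold $L$ has vanishing mean curvature vector $H$ at every one of its points.

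Next, writing $g = F^{*}(cg_{FS})$ for the induced metric and regarding $F$ as the $\bC^n$-valued position map, I would invoke the stated identity $H = \Delta F$, where $\Delta$ is the Laplace operator of $g$ acting on $F$. Substituting $H \equiv 0$ yields $\Delta F = 0$; since this holds componentwise for each of the $2n$ real coordinate functions of $F$ on $L \subset \bR^{2n}$, the map $F$ is a solution of $\Delta u = 0$, which is precisely the assertion of Corollary \ref{c}.

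The only step carrying genuine content is the identity $H = \Delta F$, and I expect it to be the main point to justify with care. In the flat Euclidean setting this is the classical statement that the Laplacian of the position vector equals the mean curvature vector, whereas here the ambient metric $cg_{FS}$ is not flat, so a priori the tension field of the isometric immersion $F$ differs from the coordinatewise Laplacian $\Delta F$ by Christoffel terms of the ambient metric. I would therefore either verify that in the present situation these ambient terms do not obstruct the reduction to $\Delta F = 0$, or else interpret $\Delta u = 0$ as the harmonic-map (equivalently, minimal-immersion) equation, which is the standard meaning of the quoted well-known identity; once this identity is in force, the corollary follows formally from the vanishing of $H$.
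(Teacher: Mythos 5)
Your proposal matches the paper exactly: the paper offers no separate argument for Corollary \ref{c} beyond the two facts you cite, namely the existence of the unique positive function $c$ making the special Lagrangian $L$ minimal in $(\bC ^n , cg_{FS})$ (from the area-minimizing property of special Lagrangians in almost Calabi-Yau manifolds stated in the Introduction), and the identity between the mean curvature vector and $\Delta F$. Your closing caveat --- that in the non-flat ambient metric $cg_{FS}$ this identity must be read as the harmonic-map (tension-field) equation rather than as a componentwise coordinate Laplacian --- is a point of care the paper itself passes over with ``it is well known,'' so your treatment is, if anything, more precise than the original.
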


In \cite{Y}, Yamamoto constructs special Lagrangian submanifolds in Calabi-Yau cones over toric Sasaki-Einstein manifolds. 
In particular, explicit Lagrangian submanifols in almost Calabi-Yau cones are obtained by \cite[Theorem 3.4]{Y}.
The Fubini-Study metric $g_{FS}$ is not a cone metric on $\bC ^n.$ 
But by a slightly modified method of \cite[Theorem 3.4]{Y}, we have the following example.
\begin{example}\label{1}
Let  $\lambda_1 ,\ldots ,\lambda_n ,C \in \bR \setminus \{0\}$ be constants. Assume that 
$\lambda_j (\lambda_j +C) >0$ for all $j=1,\ldots ,n.$
Then the submanifold given by
\begin{equation*}
L=\{(x_1 e^{i(\lambda_1 +C)s},\ldots ,x_n e^{i(\lambda_n +C)s});\sum_{j=1}^n \lambda_j x_j ^2 =C , s\in \bR ,\, x_j \in \bR \}
\end{equation*}
is Lagrangian in $(\bC ^n ,\omega_{FS}).$
(By the proof of Lemma \ref{lem} and the above assumption, $L$ is an immersed submanifold.) From the equation \eqref{det} in the proof of Lemma \ref{lem}, $L$ is special Lagrangian if and only if $\sum_j \lambda_j + nC  + \pi/2 =0.$ 
From the first part of Lemma \ref{lem}, $L$ is also Lagrangian in $(\bC ^n ,\omega_{st})$ if and only if $\lambda_1 =\cdots =\lambda_n .$ 
\end{example}
\subsection*{Acknowledgments}
The author wishes to express his thanks to A. Futaki, H. Kasuya, Y. Nitta, M. Shibata, Y. Terashima, Y. Okitsu and H. Yamamoto for useful comments and encouragement. 
\section{Proof of Theorem \ref{mm}}
 In the situation of Theorem \ref{mm}, if $L$ is Lagrangian in $(\bC ^n ,\omega_{FS})$ then $L$ is special Lagrangian in $(\bC ^n ,\omega_{FS}).$ 
So Theorem \ref{mm} is obtained immediately form the following Lemma \ref{lem} which gives the condition for $L$ being Lagrangian in $(\bC ^n ,\omega_{FS}).$ The proof of Lemma \ref{lem} is based on that of Joyce, Lee and Tsui \cite[Theorem A]{JLT} and Yamamoto \cite[Thorem 3.4]{Y}.
\begin{lemma}\label{lem}
Let $I$ be an open interval in $\bR .$ 
Let $\lambda_1 ,\ldots ,\lambda_n ,C \in \bR \setminus \{0\}$ be constants and $\, \omega _j:I\to \bC \setminus \{0\},\,\,j=1,\ldots ,n,$ smooth functions. 
We write $\omega_j =r_j e^{i\phi_j},$ for functions $r_j :I\to (0,\infty)$ and $\phi_j :I\to \bR$ or $\bR / 2\pi \bZ .$ 
We define a submanifold $\Sigma$ in $\bR^n $ by $\Sigma =\{(x_1 ,\ldots ,x_n)\in \bR ^n ;\sum_{j=1}^n \lambda_j x _j ^2 =C \}, $ and a smooth map $\iota :\Sigma \times I \to \bC ^n$ by 
$\iota ((x_1 ,\ldots ,x_n),s)=(x_1 \omega_1 (s),\ldots ,x_n \omega_n (s)).$
Suppose the following mild conditions that 
\begin{equation}\label{assumption}
\sum_{j=1}^n \frac{\lambda_j x_j ^2 \dot{\omega}_j (s)}{\omega _j (s)}\neq 0
\end{equation}
 and $\dot{\phi}_j (s) >0$ for any $s\in I$ and $(x_1 ,\ldots ,x_n)\in \Sigma.$
Then the submanifold $L$ in $\bC ^n$ given by 
\begin{equation*}
L=\iota(\Sigma \times I)=\{(x_1 \omega _1(s),\ldots ,x_n \omega _n(s));\sum_{j=1}^n \lambda_j x_j ^2 =C , s\in I,\, x_j \in \bR \}
\end{equation*}
is an immersed Lagrangian submanifold in  $(\bC ^n , \omega_{st})$ if and only if 
\begin{equation}\label{Im}
\rIm \left(\frac{\dot{\omega}_1 (s) \bar{\omega}_1 (s)}{\lambda_1}\right)=\cdots=\rIm \left(\frac{\dot{\omega}_n (s) \bar{\omega}_n (s)}{\lambda_n}\right)
\end{equation}
holds for any $s\in I.$
Furthermore, $L$ is immersed Lagrangian in both $(\bC ^n , \omega_{st})$ and $(\bC ^n , \omega_{FS})$ if and only if equation \eqref{Im} and
\begin{equation}\label{2}
\frac{|\omega_1 (s)|^2}{\lambda_1}=\cdots=\frac{|\omega_n (s)|^2}{\lambda_n}
\end{equation}
hold for any $s\in I.$
\end{lemma}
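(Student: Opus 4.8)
The plan is to work on $\Sigma\times I$ and compute the pullbacks $\iota^{*}\omega_{st}$, $\iota^{*}\omega_{FS}$ and $\iota^{*}\Omega$ directly as forms, rather than testing pairs of tangent vectors one at a time. Writing $z_j=x_j\omega_j(s)$ gives $dz_j=\omega_j\,dx_j+x_j\dot\omega_j\,ds$, and since $ds\wedge ds=0$ every pullback below is a sum of terms of the shape (a $1$-form in the $x_j$)$\,\wedge\,ds$. The observation I would use repeatedly is that a form $\eta\wedge ds$ vanishes on $\Sigma\times I$ exactly when the $1$-form $\eta=\sum_j c_j\,dx_j$ restricts to zero on $T\Sigma=\ker\bigl(\sum_j\lambda_j x_j\,dx_j\bigr)$, i.e. exactly when the coefficient vector $(c_1,\dots,c_n)$ is proportional to $(\lambda_1 x_1,\dots,\lambda_n x_n)$; evaluating this proportionality at a point of $\Sigma$ with all $x_j\neq 0$ turns it into equality of the ratios $c_j/\lambda_j$.

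First I would treat $\omega_{st}$. A short computation gives $\iota^{*}\omega_{st}=\bigl(\sum_j x_j P_j\,dx_j\bigr)\wedge ds$ with $P_j:=\rIm(\dot\omega_j\bar\omega_j)$, since $dz_j\wedge d\bar z_j=-2i\,x_jP_j\,dx_j\wedge ds$. By the observation above, $\iota^{*}\omega_{st}=0$ on $\Sigma\times I$ iff $(x_jP_j)_j\parallel(\lambda_j x_j)_j$, iff $P_j/\lambda_j$ is independent of $j$, which is exactly \eqref{Im}. Note $P_j=|\omega_j|^{2}\dot\phi_j>0$ by $\dot\phi_j>0$; this is the only place the hypothesis $\dot\phi_j>0$ really enters, and it guarantees the common ratio $\alpha:=P_j/\lambda_j$ is nonzero.

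Next, for $\omega_{FS}$ I would use the displayed formula $\omega_{FS}=\rho^{-1}\omega_{st}-\tfrac{i}{2}\rho^{-2}\sum_{p,q}\bar z_pz_q\,dz_p\wedge d\bar z_q$ with $\rho=1+|z|^{2}$. Expanding the correction term and collecting the $dx\wedge ds$ pieces (the $dx_p\wedge dx_q$ pieces cancel by symmetry) yields $\iota^{*}\bigl(\sum_{p,q}\bar z_pz_q\,dz_p\wedge d\bar z_q\bigr)=-2iQ\bigl(\sum_j x_j|\omega_j|^{2}\,dx_j\bigr)\wedge ds$, where $Q:=\sum_j x_j^{2}P_j$, hence $\iota^{*}\omega_{FS}=\rho^{-1}\bigl[\sum_j x_j\bigl(P_j-\rho^{-1}Q|\omega_j|^{2}\bigr)\,dx_j\bigr]\wedge ds$. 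Assuming $L$ is already Lagrangian in $\omega_{st}$, so \eqref{Im} holds and $P_j=\alpha\lambda_j$, the constraint $\sum_j\lambda_j x_j^{2}=C$ gives the simplification $Q=\alpha C$, so the coefficient vector becomes $\alpha\,x_j\bigl(\lambda_j-\rho^{-1}C|\omega_j|^{2}\bigr)$. Since $\alpha\neq 0$, applying the basic observation once more shows $\iota^{*}\omega_{FS}=0$ iff $(\lambda_j-\rho^{-1}C|\omega_j|^{2})_j\parallel(\lambda_j)_j$, iff $|\omega_j|^{2}/\lambda_j$ is independent of $j$, i.e. \eqref{2}. This gives the ``both Lagrangian $\iff$ \eqref{Im} and \eqref{2}'' equivalence.

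Finally, the immersion claim (and the role of \eqref{assumption}) comes from computing $\iota^{*}\Omega$: expanding $\bigwedge_j(\omega_j\,dx_j+x_j\dot\omega_j\,ds)$ and restricting to $\Sigma\times I$ contracts the Euclidean volume form $dx_1\wedge\cdots\wedge dx_n$ against the conormal $(\lambda_j x_j)_j$ of $\Sigma$, producing $\iota^{*}\Omega=c\bigl(\prod_k\omega_k\bigr)\bigl(\sum_j\lambda_j x_j^{2}\,\dot\omega_j/\omega_j\bigr)\,ds\wedge\mathrm{vol}_\Sigma$ for a nonzero constant $c$. Condition \eqref{assumption} says this $n$-form is nowhere zero, which forces $d\iota$ to have rank $n$ everywhere, so $\iota$ is an immersion. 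The main bookkeeping obstacle will be the $\omega_{FS}$ step: correctly pulling back and simplifying the quadratic correction term $\sum_{p,q}\bar z_pz_q\,dz_p\wedge d\bar z_q$, and then using $\sum_j\lambda_j x_j^{2}=C$ together with $\alpha\neq 0$ to extract \eqref{2} cleanly.
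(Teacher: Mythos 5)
Your proof is correct, and it reaches the paper's conclusions by a genuinely different computational route. The paper works pointwise: it fixes $(x,s)$, chooses an orthonormal frame $e_1,\ldots,e_{n-1}$ of $T_x\Sigma$ with unit normal $e_n$, and exploits the diagonal torus map $t(z_1,\ldots,z_n)=(e^{i\phi_1}z_1,\ldots,e^{i\phi_n}z_n)$, which satisfies $t^*\omega=\omega$ for both $\omega_{st}$ and $\omega_{FS}$, to transport every evaluation to the real point $(x_1r_1,\ldots,x_nr_n)$; there $\omega(\iota_*e_j,\iota_*e_k)=0$ because $\bR^n\subset\bC^n$ is Lagrangian for both forms, and the mixed terms $\omega(\iota_*e_j,\iota_*(\partial/\partial s))$ become Euclidean inner products of $e_j$ against an explicit vector, whose vanishing for all $j<n$ means that vector is parallel to $e_n$. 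You instead pull back $\omega_{st}$, $\tau$ and $\Omega$ through $dz_j=\omega_j\,dx_j+x_j\dot\omega_j\,ds$ and never need the torus trick: your symmetry cancellation of the $dx_p\wedge dx_q$ terms of $\tau$ replaces the ``real form is Lagrangian'' step, and your criterion that $\sum_jc_j\,dx_j$ annihilates $T\Sigma$ iff $(c_j)_j$ is proportional to $(\lambda_jx_j)_j$ is the dual formulation of the paper's ``orthogonal to all $e_j$ iff parallel to $e_n$''. Your immersion argument via the nonvanishing of $\iota^*\Omega$ is the same computation as the paper's determinant \eqref{det}, repackaged as a pullback statement. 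Two harmless imprecisions: your $c$ is not a constant but the nowhere-vanishing function $\pm(\sum_k\lambda_k^2x_k^2)^{-1/2}$; and passing from proportionality on $\Sigma$ to equality of the ratios uses that points of $\Sigma$ with all $x_j\neq 0$ are dense (true since $C\neq 0$) and that $P_j$ and $|\omega_j|^2$ depend only on $s$ --- a step the paper leaves equally implicit. As for trade-offs: your computation is more self-contained and makes the cancellations transparent, while the paper's frame-and-torus method produces the explicit formula \eqref{det}, which it reuses in Example \ref{1} to read off the special Lagrangian condition, and its treatment of $\tau$ does not need to condition on \eqref{Im} first, since it keeps the positive factor $\sum_qx_q^2r_q^2\dot\phi_q$ instead of substituting $P_j=\alpha\lambda_j$ and $Q=\alpha C$.
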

\begin{proof}
Fix $x=(x_1 ,\ldots , x_n)\in \Sigma $ and $s\in I .$
Let $e_1 ,\ldots,e_{n-1}$ be an orthonormal basis for $T_x \Sigma$ in $\bR ^n ,$ and write 
$e_j =(a_{j1},\ldots ,a_{jn})$ in $\bR ^n$ for $j=1,\ldots ,n-1.$
Let $e_n =(\sum_{j=1}^{n}\lambda_j ^2 x_j ^2)^{-1/2}\cdot (\lambda_1 x_1,\ldots , \lambda_n x_n).$ Then $e_n$ is unit normal vector to $\Sigma$ at $x$ in $\bR ^n .$ 
Let $e_1 ,\ldots ,e_{n-1}$ be chosen so that $\det (e_1 ,\ldots ,e_{n-1},e_n)=1,$ regarding 
$e_1 ,\ldots ,e_n$ as column vectors, and $(e_1 ,\ldots ,e_n)$ as $n\times n$ matrix.
Now $e_1 ,\ldots ,e_{n-1},\partial /\partial s$ is a basis for $T_{(x,s)}(\Sigma \times I).$
Then we have 
$\iota _* (e_j)=(a_{j1}\omega_1(s),\ldots ,a_{jn}\omega_n (s)),$ for $j<n,$ and 
$\iota_* (\partial /\partial s)=(x_1 \dot{\omega}_1 (s),\ldots ,x_n \dot{\omega}_n (s)).$
Then we compute
\begin{equation}\label{det}
\begin{split}
\det (\iota _* (e_1),\ldots ,\iota _* (e_{n-1}),\iota_* (\partial /\partial s))=&
\begin{vmatrix}
a_{11}\omega _1&\cdots & a_{(n-1)1}\omega _1& x_1 \dot{\omega}_1\\
\vdots & &\vdots &\vdots \\ 
a_{1n}\omega_n &\cdots & a_{(n-1)n}\omega _n& x_n \dot{\omega}_n\\
\end{vmatrix}
\\
=&
\begin{matrix}
\omega_1 \cdots \omega _n 
\end{matrix}
\begin{vmatrix}
a_{11} &\cdots & a_{(n-1)1} & x_1 \dot{\omega}_1 /\omega_1 \\
\vdots & &\vdots &\vdots \\ 
a_{1n} &\cdots & a_{(n-1)n} &  x_n \dot{\omega}_n /\omega_n\\
\end{vmatrix}\\
=& \omega_1 \cdots \omega_n \left< e_n ,\left(\frac{x_1  \dot{\omega}_1}{\omega _1},\ldots \frac{x_n  \dot{\omega}_n}{\omega _n}\right) \right>  \\
=&\omega_1 \cdots \omega_n \sum_{j=1}^n \frac{\lambda_j x_j}{\sqrt{\sum_{k=1}^n \lambda_k ^2 x_k ^2}}\cdot \frac{x_j  \dot{\omega}_j}{\omega _j}\\
=& \frac{\omega_1 \cdots \omega _n }{\sqrt{\sum_{k=1}^n \lambda_k ^2 x_k ^2}}\sum_{j=1}^n \frac{\lambda_j x_j ^2 \dot{\omega}_j}{\omega _j}.\\
\end{split}
\end{equation}
Since we have the assumption \eqref{assumption}, we get $\det (\iota _* (e_1),\ldots ,\iota _* (e_{n-1}),\iota_* (\partial /\partial s))\neq 0.$
This implies that $\iota$ is an immersion.
Put $\omega =\omega_{st} \,\,{\rm or} \,\,\omega_{FS}.$
Define $t:\bC ^n \to \bC ^n$ by $t(z_1 ,\ldots , z_n)=(e^{i\phi _1}z_1 ,\ldots ,e^{i\phi _n}z_n).$
Then we have $t^* \omega =\omega .$
Fix $1\leq j\leq n-1$ and $1\leq k\leq n-1.$ We compute 
\begin{equation*}
\begin{split}
\omega(\iota _* (e_j),\iota _* (e_k)) =& \omega _{(x_1 \omega_1 ,\ldots ,x_n \omega_n)}((a_{j1}\omega_1,\ldots ,a_{jn}\omega_n ),(a_{k1}\omega_1,\ldots ,a_{kn}\omega_n ))\\
 =& \omega _{t(x_1 r_1 ,\ldots ,x_n r_n)}(dt_{(x_1 r_1 ,\ldots ,x_n r_n)}(a_{j1}r_1,\ldots ,a_{jn}r_n )\\
&\quad \quad \quad \quad \quad \quad \quad \quad ,dt_{(x_1 r_1 ,\ldots ,x_n r_n)}(a_{k1}r_1,\ldots ,a_{kn}r_n ))\\ 
=& (t^*\omega) _{(x_1 r_1 ,\ldots ,x_n r_n)}((a_{j1}r_1,\ldots ,a_{jn}r_n ),(a_{k1}r_1,\ldots ,a_{kn}r_n ))\\
=& \omega _{(x_1 r_1 ,\ldots ,x_n r_n)}((a_{j1}r_1,\ldots ,a_{jn}r_n ),(a_{k1}r_1,\ldots ,a_{kn}r_n )).\\
\end{split}
\end{equation*}
By the above calculation and the fact that the real form $\bR ^n \subset \bC ^n$ is a Lagrangian in both $(\bC ^n ,\omega_{st})$ and $(\bC ^n , \omega_{FS}),$ we have $\omega(\iota _* (e_j),\iota _* (e_k)) =0.$ So in order to find the condition $\iota ^* \omega =0,$ we compute that of $\omega(\iota _* (e_j),\iota _* (\partial /\partial s)) =0$ for $1\leq j\leq n-1.$ We obtain  
\begin{equation}\label{T}
\begin{split}
\omega(\iota _* (e_j),\iota _* (\partial /\partial s)) =& \omega _{(x_1 \omega_1 ,\ldots ,x_n \omega_n)}((a_{j1}\omega_1,\ldots ,a_{jn}\omega_n ),(x_1 \dot{\omega}_1,\ldots ,x_n \dot{\omega}_n ))\\
=& \omega _{(x_1 \omega_1 ,\ldots ,x_n \omega_n)}((a_{j1}\omega_1,\ldots ,a_{jn}\omega_n )\\
 &\quad \quad \quad \quad \quad \quad  \quad \quad ,(x_1 (\dot{r}_1 +i r_1 \dot{\phi}_1)e^{i\phi_1},\ldots ,x_n (\dot{r}_n +i r_n \dot{\phi}_n)e^{i\phi_n} ))\\
 =& \omega _{t(x_1 r_1 ,\ldots ,x_n r_n)}(dt_{(x_1 r_1 ,\ldots ,x_n r_n)}(a_{j1}r_1,\ldots ,a_{jn}r_n )\\
 &\quad \quad \quad \quad \quad \quad  \quad \quad , dt_{(x_1 r_1 ,\ldots ,x_n r_n)}(x_1 (\dot{r}_1 +i r_1 \dot{\phi}_1),\ldots ,x_n (\dot{r}_n +i r_n \dot{\phi}_n) ))\\ 
=& (t^*\omega) _{(x_1 r_1 ,\ldots ,x_n r_n)}((a_{j1}r_1,\ldots ,a_{jn}r_n )\\
&\quad \quad \quad \quad \quad \quad  \quad \quad,(x_1 (\dot{r}_1 +i r_1 \dot{\phi}_1),\ldots ,x_n (\dot{r}_n +i r_n \dot{\phi}_n) ))\\
=& \omega _{(x_1 r_1 ,\ldots ,x_n r_n)}((a_{j1}r_1,\ldots ,a_{jn}r_n ),(x_1 (\dot{r}_1 +i r_1 \dot{\phi}_1),\ldots ,x_n (\dot{r}_n +i r_n \dot{\phi}_n) ))\\
=& \omega _{(x_1 r_1 ,\ldots ,x_n r_n)}((a_{j1}r_1,\ldots ,a_{jn}r_n ),J(x_1  r_1 \dot{\phi}_1,\ldots ,x_n  r_n \dot{\phi}_n )).\\
\end{split}
\end{equation}

From the above calculation and $\omega_{st} (\cdot ,J \cdot)= \langle \cdot ,\cdot \rangle,$ we have
\begin{equation*}
\begin{split}
\omega_{st}(\iota _* (e_j),\iota _* (\partial /\partial s)) =&\rRe \left( \sum _{l=1}^n (a_{jl}r_l \cdot \overline{x_l r_l \dot{\phi}_l}) \right)\\
 =&\sum _{l=1}^n a_{jl} x_l r_l ^2 \dot{\phi}_l\\
=&\sum _{l=1}^n a_{jl} x_l \,\rIm (\bar{\omega}_l \dot{\omega}_l )\\
=&\langle  e_j ,(x_1 \rIm (\bar{\omega}_1 \dot{\omega}_1 ),\ldots ,x_n \rIm (\bar{\omega}_n \dot{\omega}_n ) )\rangle \\
=&\langle  e_j ,\left(\lambda_1 x_1 \frac{\rIm (\bar{\omega}_1 \dot{\omega}_1)}{\lambda_1} ,\ldots ,\lambda_n x_n \frac{\rIm (\bar{\omega}_n \dot{\omega}_n ) }{\lambda_n}\right)\rangle .\\ 
\end{split}
\end{equation*}
Thus $\iota ^* \omega _{st} =0$ if and only if the equation \eqref{Im} holds.
This completes the proof of the first part of Lemma \ref{lem}. 

Put $\tau=\sum_{p,q=1}^n \bar{z}_p z_q dz_p \wedge d\bar{z}_q.$ 
In $\bC ^n$ with coordinates $(z_1 ,\ldots ,z_n),$ we have
$$\omega_{FS}=\frac{1}{1+|z_1|^2 +\cdots +|z_n|^2}\,\omega_{st}-\frac{i}{2(1+|z_1|^2 +\cdots +|z_n|^2)^2}\,\tau .$$
Therefore $\iota ^* \omega_{st}=0$ and $\iota ^* \omega_{FS}=0$ if and only if the equation \eqref{Im} and
$\tau(\iota _* (e_j),\iota _* (\partial /\partial s)) =0$ hold for $1\leq j\leq n-1.$
Similarly to the calculation \eqref{T}, we obtain 
$$\tau(\iota _* (e_j),\iota _* (\partial /\partial s)) =\tau _{(x_1 r_1 ,\ldots ,x_n r_n)}((a_{j1}r_1,\ldots ,a_{jn}r_n ), J(x_1  r_1 \dot{\phi}_1,\ldots ,x_n  r_n \dot{\phi}_n )).$$
Therefore we have 
\begin{equation*}
\begin{split}
\quad &\tau(\iota _* (e_j),\iota _* (\partial /\partial s)) \\
=&\sum_{p,q}x_p r_p x_q r_q(a_{jp}r_p (-i) x_q r_q \dot{\phi}_q 
  -a_{jq}r_q i x_p r_p \dot{\phi}_p ) \\
=&-i\sum_{p,q}x_p x_q ^2 r_p ^2 r_q ^2 a_{jp} \dot{\phi}_q -i\sum_{p,q}x_p ^2 x_q r_p ^2 r_q ^2 a_{jq} \dot{\phi}_p\\
=&-i\sum_{p,q}x_p x_q ^2 r_p ^2 r_q ^2 a_{jp} \dot{\phi}_q -i\sum_{p,q}x_q ^2 x_p r_q ^2 r_p ^2 a_{jp} \dot{\phi}_q\\
=&-2i\sum_{p,q}x_p x_q ^2 r_p ^2 r_q ^2 a_{jp} \dot{\phi}_q \\
=&-2i(\sum _p x_p r_p ^2 a_{jp})\cdot(\sum_q x_q ^2 r_q ^2 \dot{\phi}_q) \\
=&-2i\langle (x_1 r_1 ^2 ,\ldots ,x_n r_n ^2) , e_j \rangle \cdot(\sum_q x_q ^2 r_q ^2 \dot{\phi}_q)\\
=&-2i\langle (\lambda_1 x_1 \frac{r_1 ^2}{\lambda_1} ,\ldots ,\lambda_n x_n \frac{r_n ^2}{\lambda_n}) , e_j \rangle \cdot(\sum_q x_q ^2 r_q ^2 \dot{\phi}_q)\\
=&-2i\langle (\lambda_1 x_1 \frac{|\omega_1 | ^2}{\lambda_1} ,\ldots ,\lambda_n x_n \frac{|\omega_n| ^2}{\lambda_n}), e_j  \rangle \cdot(\sum_q x_q ^2 r_q ^2 \dot{\phi}_q).\\
\end{split}
\end{equation*}
From the above calculations and the assumption $\dot{\phi}_j >0,$ we can see that $\iota ^* \omega_{st}=0$ and $\iota ^* \omega_{FS}=0$ if and only if the equations \eqref{Im} and \eqref{2} hold.
This finishes the proof of Lemma \ref{lem}.
\end{proof}
By this Lemma, we obtain Theorem \ref{mm}.


\begin{thebibliography}{99}
\bibitem{GHJ}
\textsc{G. Gross, D. Huybrechts, and D. Joyce}, 
Calabi-Yau manifolds and related geometries, Universitext, Springer-Verlag, Berlin, 2003.

\bibitem{La}
\textsc{G. Lawlor}, 
The angle criterion, Inventiones math. 95 (1989), 437--446.

\bibitem{Mc}
\textsc{R.C. McLean}, 
Deformations of calibrated submanifolds, Communications in Analysis and Geometry 6 (1998), 705--747.

\bibitem{N}
\textsc{H. Nakahara},
Some examples of self-similar solutions and translating solitons for mean curvature flow, arXiv:
1109.3554, to appear in Tohoku Mathematical Journal, Vol. 65, No. 3.

\bibitem{JLT}
\textsc{D. Joyce, Y.-I. Lee and M.-P. Tsui},
Self-similar solutions and translating solitons for
Lagrangian mean curvature flow, J. Differential Geom. 84 (2010), 127--161.

\bibitem{Y}
\textsc{H. Yamamoto},
Special Lagrangians and Lagrangian self-similar solutions in cones over toric Sasaki manifolds, arXiv:1203.3934v2.




\end{thebibliography}
\end{document}